\numberwithin{equation}{section}
\theoremstyle{plain}
\newtheorem{theorem}[equation]{Theorem}   %[section] 
\newtheorem*{Main Theorem}{Main Theorem}
\newtheorem{lemma}[equation]{Lemma} 
\newtheorem{proposition}[equation]{Proposition}
\theoremstyle{definition}
\newtheorem{definition}[equation]{Definition} 
\newtheorem{remark}[equation]{Remark} 
\newtheorem{example}[equation]{Example}
\begin{document}   

\renewcommand{\:}{\! :} 
\newcommand{\p}{\mathfrak p} 
\newcommand{\m}{\mathfrak m}
\newcommand{\e}{\epsilon}
\newcommand{\lra}{\longrightarrow} 
\newcommand{\lla}{\longleftarrow}
\newcommand{\ra}{\rightarrow} 
\newcommand{\altref}[1]{{\upshape(\ref{#1})}} 
\newcommand{\bfa}{\boldsymbol{\alpha}} 
\newcommand{\bfb}{\boldsymbol{\beta}} 
\newcommand{\bfg}{\boldsymbol{\gamma}} 
\newcommand{\bfM}{\mathbf M} 
\newcommand{\bfI}{\mathbf I} 
\newcommand{\bfC}{\mathbf C} 
\newcommand{\bfB}{\mathbf B} 
\newcommand{\bsfC}{\bold{\mathsf C}} 
\newcommand{\bsfT}{\bold{\mathsf T}}
\newcommand{\smsm}{\smallsetminus} 
\newcommand{\ol}{\overline} 
\newcommand{\os}{\overset}
\newcommand{\us}{\underset}

\newlength{\wdtha}
\newlength{\wdthb}
\newlength{\wdthc}
\newlength{\wdthd}
%\newcommand{\elabel}{\label}
%\newcommand{\mlabel}{\label}
%\reversemarginpar 
\newcommand{\elabel}[1]
           {\label{#1}  
            \setlength{\wdtha}{.4\marginparwidth}
            \settowidth{\wdthb}{\tt\small{#1}} 
            \addtolength{\wdthb}{\wdtha}
            \raisebox{\baselineskip}
            {\color{red} 
             \hspace*{-\wdthb}\tt\small{#1}\hspace{\wdtha}}}  

\newcommand{\mlabel}[1] 
           {\label{#1} 
            \setlength{\wdtha}{\textwidth}
            \setlength{\wdthb}{\wdtha} 
            \addtolength{\wdthb}{\marginparsep} 
            \addtolength{\wdthb}{\marginparwidth}
            \setlength{\wdthc}{\marginparwidth}
            \setlength{\wdthd}{\marginparsep}
            \addtolength{\wdtha}{2\wdthc}
            \addtolength{\wdtha}{2\marginparsep} 
            \setlength{\marginparwidth}{\wdtha}
            \setlength{\marginparsep}{-\wdthb} 
            \setlength{\wdtha}{\wdthc} 
            \addtolength{\wdtha}{1.1ex} 
            \marginpar{\vspace*{-0.3\baselineskip}%\color{blue}
                       \tt\small{#1}\\[-0.4\baselineskip]\rule{\wdtha}{.5pt} }
            \setlength{\marginparwidth}{\wdthc} 
            \setlength{\marginparsep}{\wdthd}  }

%\begin{document}

\title{Betti-linear Ideals}
\author[D. Wood]{Daniel Wood} 
\address{Department of Mathematics\\
         University at Albany, SUNY\\ 
         Albany, NY 12222}
\email{dwood@math.albany.edu}
\keywords{} 
\subjclass{} 
%\date{\today} 
%\begin{abstract} 
%\end{abstract} 

\begin{abstract}
We introduce the notion of a Betti-linear monomial ideal, which generalizes the 
notion of lattice-linear monomial ideal introduced by Clark. We provide a characterization 
of Betti-linearity in terms of Tchernev's poset construction. As an 
application we obtain an explicit canonical construction for the minimal 
free resolutions of monomial ideals having pure resolutions.
\end{abstract}

\maketitle

\section*{Introduction}

\begin{comment} 
Let $k$ be a field. Consider the polynomial ring,
$R = k[x_1,\ldots,x_n]$ with the standard $\mathbb{Z}^n$-grading. Let $I \subseteq R$ be a 
monomial ideal with $\left\{m_1,\ldots, m_k\right\}$ a minimal monomial generating set for $I$ and 
we can consider the minimal free resolution of $I$ over $R$,
$$
\mathscr{F}: 0 \lla F_0 \os{\phi_1}{\lla} F_1 \lla \ldots
\lla F_{i-1} \os{\phi_i}{\lla} F_i \lla 
\ldots \lla F_n \lla 0
$$
where each $F_i$ is a free $\mathbb{Z}^n$-graded $R$-module and the differential 
$\phi_.$ preserves degrees. 
\end{comment} 

Understanding the structure of minimal free resolutions of monomial ideals 
is an active area of research in commutative algebra. One important aspect 
of this problem is investigating what it means for the resolution to be 
linear or close to linear.  
%Linearity of the resolution $\mathscr{F}$ is a desirable notion. 
For example, Eagon and Reiner~\cite{ER} have shown that an ideal $I$ with linear resolution has 
an Alexander dual that is Cohen-Macaulay; Herzog and Hibi~\cite{HH}  
introduce and study the notion of componentwise linear ideals; and 
Clark~\cite{Cl} 
introduced the notion of lattice-linearity and proved a criterion 
for $I$ to be 
lattice-linear in terms of acyclity properties of the so-called 
\emph{poset construction}. 

In this paper we introduce the new notion of a 
\emph{Betti-linear} monomial ideal %$\mathscr{F}=(F_k,\phi_k)$ 
that generalizes the notion of lattice-linearity. 
We provide a criterion for 
Betti-linearity in terms of the poset construction. A class of Betti-linear 
monomial ideals are those having pure resolution. 
They arise  
in connection with Boij-Soderberg theory \cite{BS,BS2,ES,EFW},  
%Monomial ideals with pure resolution 
and were studied recently by 
Francisco, Mermin, and Schweig~\cite{FMS}. Thus, our main result yields an explicit description of the structure of the minimal free resolution of $I$ when $I$ has pure resolution.

To be specific, 
let $B$ be the Betti poset \cite{CM,TV} of the monomial ideal $I$ with minimal 
free reolution $\mathcal F=(F_k,\phi_k)$. Thus, $B$ is 
the %sub-poset of $\mathbb{Z}^n$ 
set of monomial degrees of the basis elements of $\mathcal{F}$ ordered by divisibility.
%containing only those elements $\alpha$ for which one of the $F_i$ 
%contains a basis element of degree $\alpha$. 
We say that the ideal $I$ is \textit{Betti-linear} 
if we can fix homogeneous bases $B_k$ of the 
free modules $F_k$ for all k so that for any $i \geq 1$ and 
any $\tau \in B_i$, the element
$$
\phi_{i}(\tau)=\sum_{\gamma \in B_{i-1}}{\left[\tau:\gamma\right]\cdot \gamma}
$$
has the property that if $\left[\tau,\gamma\right] \neq 0$ then 
the monomial degree of $\gamma$ is covered in the poset $B$ by that of $\tau$.
In our main result, Theorem~\ref{main theorem}, we show that
$I$ is Betti-linear if and only if $\mathcal{F}$ can be recovered from the poset 
construction applied to the Betti poset of the ideal $I$.

%To prove this fact, we will examine the Betti poset 
%$B \subseteq L$. We will formalize an idea of 
%"Betti linearity" completely analogous to that of lattice-linearity.

%In order to prove this fact, we will need to discuss the poset construction
%due to Tchernev. 

The structure of this paper is as follows: In Section 2, we introduce the notion of Betti linearity,  examine a few examples, and state our main theorem. In Section 3 we discuss the poset construction in detail. Section 4 provides a few key properties of the Betti poset. Finally, in Section 5 we give a proof of our main theorem.

I would like to thank Alexandre Tchernev and Timothy Clark for their helpful discussions and insights.

\section{Preliminaries} 
Throughout this paper, $k$ is a fixed field and $R=k[x_1,\ldots,x_n]$ is a polynomial ring over $k$. 
The ring $R$ as a vector space over $k$ is the direct sum
\[
R= \bigoplus_{\alpha \in \mathbb{Z}^n} R_{\alpha}
\]
where $R_{\alpha}= k \cdot x^{\alpha}$ for $\alpha \in \mathbb{N}^n$ and is $0$ otherwise. 
%Observe that $R_{\alpha} \cdot R_{\beta} \subseteq R_{\alpha + \beta}$ with equality when both 
%$\alpha$ and $\beta$ are in $\mathbb{N}^n$. 
Thus, $R$ is a $\mathbb{Z}^n$-graded or \emph{multigraded} 
$k$-algebra. Let $\m = (x_1, \ldots, x_n)$ be the unique graded maximal ideal of the ring 
$R$. For $a = (a_1, \ldots, a_n) \in \mathbb{N}^n$, we write 
\[
x^{a} = x_{1}^{a_1} \cdots x_{n}^{a_n} \in R.
\]
%Give $R$ the standard $\mathbb{Z}^{n}$-grading (or multigrading). Using the multidegree map
%$mdeg : R \lra \mathbb{Z}^n$, defined by  
%\[
%mdeg(x^{a}) = a = (a_1, \ldots,a_n)
%\]
and we will always identify $x^a$ with its exponent $a \in \mathbb{Z}^{n}$.
For $\gamma \in \mathbb{Z}^n$ we write $R(-\gamma)$ for the shifted multigraded $R$-module %with 
%\[
%R(-\gamma)= \bigoplus_{\alpha \in \mathbb{N}^n} R(-\gamma)_{\alpha}
%\]
with $R(-\gamma)_{\alpha} = R_{\alpha - \gamma}$. Thus, $R(-\gamma)$ is free of rank 1 with basis a single homogeneous element of degree $\gamma$.

Let $\left(P,\leq\right)$ be a poset. Let $\sigma \subseteq P$ 
be a subset of P. If the meet or join of $\sigma$ exist, they are denoted as 
$\wedge\sigma$ and $\vee\sigma$ respectively. If $\sigma$ has the form 
$$
x_0 < x_1 < \ldots < x_k
$$
then $\sigma$ is called a \textit{chain of length k} or a \textit{k-chain} of $P$. For any element $x \in P$,
we define the \textit{dimension} of $x$ to be 
$$
d_P(x)=d(x)=sup\left\{k:x_0<\ldots<x_k=x\right\}
$$

Any subset of $P$ that is comprised of elements that are pairwise 
incomparible is called an \textit{anti-chain}. An element $y \in P$ is said to 
be \textit{covered} by $x$, which we denote $y \lessdot x$, when it is true that 
$y < x$ and there exists no $z \in P$ so that $y<z<x$. Denote by $P_{<x}$ the subset of $P$ given by
$$
P_{<x}=\left\{z \in P : z<x\right\},
$$
with $P_{\leq x}$ defined analogously. Let $A$ be the set of minimal elements of $P$,
and note that $a \in A$ if and only if $d(a)=0$ %We say the poset $P$ is \textit{ranked} if it
%is true that $d(x)=d(y)+1$ for all $y \lessdot x \in P$.

To a poset $P$, we associate its order complex $\Delta\left(P\right)$ which is an abstract simplicial  
complex whose vertices are the elements of $P$ and for each $k>0$, the 
$k$-dimensional faces of $\Delta\left(P\right)$ are  
the $k$-chains of $P$. When we refer to the topological properties 
of the poset $P$, we are referring to the topological properties of the abstract simplicial 
complex $\Delta\left(P\right)$.

Conversely, given a simplicial complex $S$, one may define the face poset of $S$, $F\left(S\right)$, 
which is the set of nonempty faces of $S$ partially ordered by inclusion. Under these correspondences, we can 
identify the first barycentric subdivision of $S$ as \textbf{sd}$\left(S\right)= \Delta\left(F\left(S\right)\right)$.

\section{Betti-linearity}
%Let $k$ be a field. Consider the polynomial ring
%$R = k[x_1,\ldots,x_n]$. 
Let $I \subseteq R$ be a monomial ideal  
%Let $L$ be the lcm-lattice 
%of the multidegrees of the minimal generators of the ideal $I$. 
%Let $B \subseteq L$ be 
%the sub-poset of the lcm-lattice defined by $\alpha \in B \iff$ 
%there is $i \in \left\{1,\ldots, n\right\}$
%so that $\beta_{i,\alpha} \ne 0$. Denote by $d_{B}(\alpha)$ the dimension of 
%the element $\alpha$ in the poset $B$.
with minimal $\mathbb{Z}^n$-graded free resolution
\[
\mathcal{F}: 0 \lla F_0 \os{\phi_1}{\lla} F_1 \os{\phi_2}{\lla} \ldots
\os{\phi_{i-1}}{\lla} F_{i-1} \os{\phi_i}{\lla} F_i \os{\phi_{i+1}}{\lla} 
\ldots \os{\phi_m}{\lla} F_m \lla 0.
\]
For each $i$, write 
\[
mdeg:B_i \lra \mathbb{Z}^n
\] for the map that assigns to a basis element 
$\tau \in B_i$ its $\mathbb{Z}^n$-degree. In particular, $mdeg(\tau)$ is an element of the Betti 
poset $B$ of $I$ over $k$. We also write $deg(\tau)$ for the total degree of $\tau$ in $\mathbb{Z}$.

%\begin{definition}
%Consider the collection 
%\[
%BLS_{t} = \left\{ \bigoplus_{\alpha \in \mathscr{B}}{Re_\alpha} : ht_{\mathscr{B}}(\alpha) = t \right\}
%\]
%where $e_\alpha$ is a basis element of the free module $F_t$. We will call the collection $BLS_t$ the 
%\emph{Betti-Linear} strand of the minimal free resolution of $I$ in homological dimension $t$. 
%\end{definition}

\begin{definition}
The ideal $I$ is called \emph{Betti-linear} if in the minimal free resolution $\mathcal{F}$ we can fix a $\mathbb{Z}^n$-graded basis $B_t$ of $F_t$ for each $t$ so that for all $i \geq 1,$ and 
for all $\tau \in B_i$, we have that 
\[
\phi_{i}(\tau) = \sum_{\gamma \in B_{i-1}}{[\tau:\gamma]\gamma}
\]
has that property that if $[\tau:\gamma] \neq 0$ then $mdeg(\gamma) \lessdot_{B} mdeg(\tau)$.
\end{definition}

An important invariant of $I$ is its \emph{lcm-lattice} consisting of the joins in 
$\mathbb{N}^n$ of subsets of the degrees of the minimal generators of $I$. 
We denote by $L$ the lcm-lattice of $I$ without its minimal element. It is well known that the Betti 
poset is a subposet of $L$, thus the notion of Betti-linearity generalizes the previously established notion of 
lattice-linearity due to Clark \cite{Cl}. %We recall the definition here:

%\begin{definition} \cite{Cl} The ideal $I$ is called \emph{lattice-linear} if in the minimal 			%free resolution of $I$ we can fix for each $t$ the $\mathbb{Z}^m$-graded bases $B_t$ of $F_t$ so that %for all $i \geq 1$ and every $\tau \in B_i$ we have that 
%			\[
%			\partial_i(\tau)=\sum_{\gamma \in B_{i-1}}[\tau:\gamma]\gamma
%			\]
%has the property that $[\tau:\gamma] \neq 0$ implies that $\tau$ is a cover of $\gamma$ 
%in $L$ 
%\end{definition}

\begin{example}
Let $k$ be any field. Let $R=k[a,b,c,d,e]$ be the polynomial ring over $k$ in 5 variables. 
Consider the ideal $I=(ac,bd,ae,de) \subseteq R$. The minimal free resolution of this ideal takes the form 
				\[
				0 \longleftarrow R \longleftarrow R^4 \longleftarrow R^4 \longleftarrow R \longleftarrow 0
				\]
and is known to be Betti-linear, however, the ideal $I$ does not have a lattice-linear resolution. For the given ideal $I$, the lcm-lattice of $I$ is 
\[
				\begin{tikzpicture}
[notinB/.style={rectangle, rounded corners=1.5mm, 
                densely dashed, draw=black}]
\node (abcde)          at (-.5,4) {$abcde$};
\node (acde)  [notinB] at  (-1,3) {$\scriptstyle acde$};
\node (abde)  [notinB] at  (+1,3) {$\scriptstyle abde$};
\node (abcd)           at  (-4,2) {$abcd$};
\node (ace)            at  (-2,2) {$ace$};
\node (ade)            at   (0,2) {$ade$};
\node (bde)            at  (+2,2) {$bde$};
\node (ac)             at  (-3,0) {$ac$};
\node (ae)             at  (-1,0) {$ae$};
\node (bd)             at  (+1,0) {$bd$};
\node (de)             at  (+3,0) {$de$};
\draw (de) to (ade) to (ae) to (ace) to (ac) to (abcd) 
           to (abcde) to (abde) to (bde) to (de)
      (abcde) to (acde) to (ace);
\draw[preaction={draw=white, -, line width=5pt}] 
      (bde) to (bd) to (abcd);
\draw[densely dashed] (ade) to (acde)
                      (ade) to (abde)
                      (ade) to (abcde);
\end{tikzpicture}
		\]
Here, the dashed entries indicate elements $\alpha$ for which $\alpha \notin B$.
\end{example}

\begin{example}
Let $k$ be any field. Let $R=k[a,b,c,d]$ be a polynomial ring over $k$. 
Consider 
the ideal 
\[
I = (ad, bc^2, c^3d^4, a^2, ab, ac, b^2c, c^4d^3, b^3, c^5d^2, cd^6, c^7).
\]
This is a stable ideal of $R$ with respect to the ordering $a<b<c<d$. 
The formulas of Eliahou and Kervaire~\cite{EK} show that in 
homological degree $1$ of the minimal resolution of $I$ there is a single  
direct summand $R\bigl(-(1,1,2,0)\bigr)$, and provide a generator $\tau$ 
for it. They also show that in the same homological degree 
there is a unique (up to a constant multiple) 
basis element $\sigma$ with 
%$\tau$ satisfying 
\[
mdeg(\sigma) = (1,1,1,0) ; 
%mdeg(\tau) = (1,1,2,0).
\]
and that all other basis elements degrees are not comparable with the 
degree of $\tau$. 
Furthermore, $\tau$ maps onto an element with a component that is a 
non-zero multiple of the unique basis element  
$\eta$ in homological degree $0$ with $mdeg(\eta) = (1,1,0,0)$. 
Clearly, one has that $\eta < \sigma < \tau$ in $B$, so that $\tau$ is not 
a cover for $\eta$. Since any other choice of a homogeneous basis element 
of degree $(1,1,2,0)$ in homological degree $1$ has to be of the form 
$r\tau + sc\sigma$ for some constants $r,s\in k$ with $r\ne 0$, 
it is immediate that any such choice will violate  
the Betti-linearity condition as well. 
It follows that the stable ideal $I$ is not Betti-linear. 
\end{example}

Next, we show that monomial ideals with pure resolution are in fact  
Betti-linear ideals:

\begin{proposition}
Let $I \subseteq R=k[x_1, \ldots, x_n]$ be a monomial ideal with pure resolution. Then $I$ is a 
Betti-linear ideal.
\end{proposition}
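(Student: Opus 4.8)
The plan is to establish a statement slightly stronger than asked: when $I$ has pure resolution, the Betti-linearity condition holds for \emph{every} choice of homogeneous $\mathbb{Z}^n$-graded bases $B_t$ of the modules $F_t$, because the differential of $\mathcal{F}$ can never skip a level of the Betti poset $B$. First I would recall what purity means here: there are integers $d_0 < d_1 < \cdots < d_m$ such that every basis element $\tau \in B_t$ has total degree $deg(\tau) = d_t$. Since $t \mapsto d_t$ is strictly increasing, a given $\alpha \in B$ can arise as $mdeg$ only of basis elements lying in one single homological degree, which I will call the homological degree of $\alpha$; if this is $t$, then the total degree of the monomial $\alpha$ is $d_t$.

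Next I would fix arbitrary homogeneous bases $B_t$ and take $i \geq 1$, $\tau \in B_i$, and $\gamma \in B_{i-1}$ with $[\tau:\gamma] \neq 0$; the goal is to show $mdeg(\gamma) \lessdot_B mdeg(\tau)$. Minimality of $\mathcal{F}$ gives $[\tau:\gamma] \in \m$, and comparing $\mathbb{Z}^n$-degrees in the homogeneous identity $mdeg(\tau) = mdeg([\tau:\gamma]) + mdeg(\gamma)$ shows that $mdeg(\gamma)$ properly divides $mdeg(\tau)$, i.e. $mdeg(\gamma) < mdeg(\tau)$ in $B$. Suppose this relation were not a cover, so that $mdeg(\gamma) < \delta < mdeg(\tau)$ for some $\delta \in B$; let $j$ be the homological degree of $\delta$, so that the total degree of $\delta$ equals $d_j$. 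Since comparability in the Betti poset is proper divisibility of monomials, the two strict relations $mdeg(\gamma) < \delta$ and $\delta < mdeg(\tau)$ force $d_{i-1} = deg(\gamma) < d_j < deg(\tau) = d_i$. This contradicts $d_0 < d_1 < \cdots < d_m$, since $i-1$ and $i$ are consecutive integers. Therefore $mdeg(\gamma) \lessdot_B mdeg(\tau)$.

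Since $[\tau:\gamma] \neq 0$ forces $mdeg(\gamma) \lessdot_B mdeg(\tau)$ for all $i \geq 1$ and all $\tau \in B_i$, the displayed condition in the definition of Betti-linearity is satisfied --- and, as noted, for \emph{any} homogeneous bases --- so $I$ is Betti-linear. The argument is short, and there is no serious obstacle; the one point that requires a moment's care is the interpretation of comparability in $B$ as \emph{strict} divisibility of monomials, which is precisely what converts ``strictly between in $B$'' into ``strictly between in total degree'' and lets the strict monotonicity of the degree sequence $d_0 < \cdots < d_m$ of a pure resolution do the work.
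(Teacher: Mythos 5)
Your proof is correct and follows essentially the same route as the paper: use purity to place the total degree of any element of $B$ in the strictly increasing list $d_0 < d_1 < \cdots < d_m$, then note that an intermediate element of $B$ strictly between $mdeg(\gamma)$ and $mdeg(\tau)$ would have total degree strictly between the consecutive values $d_{i-1}$ and $d_i$, a contradiction. The only difference is that you make explicit the (true, and implicit in the paper) observation that the argument works for an arbitrary homogeneous basis.
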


\begin{proof}
Suppose that the minimal free resolution of $I$ is 
\[
\mathcal{F}: 0 \lla F_0 \os{\partial_1}{\lla} F_1 \os{\partial_2}{\lla} \ldots
\os{\partial_{i-1}}{\lla} F_{i-1} \os{\partial_i}{\lla} F_i \os{\partial_{i+1}}{\lla} 
\ldots \os{\partial_k}{\lla} F_k \lla 0
\]
then because this resolution is pure, it must be the case that for each basis element 
$\sigma \in F_i$ that $deg(\sigma) = d_i$. Suppose now that $\sigma \in F_i$ and $\tau \in F_{i-1}$ are 
basis elements such that $[\sigma : \tau] \neq 0$. Clearly, $\tau < \sigma$. We want to show that 
$\tau \lessdot \sigma$.

Suppose that there was $\gamma \in B$ such that $\tau < \gamma < \sigma$. The resolution is 
minimal, so therefore $deg(\tau)=d_{i-1} < deg(\gamma) < d_i = deg(\sigma)$. This is impossible, as the elements of 
$B$ must have total degree which is an element of the set $\left\{ d_0, d_1, \ldots, d_{k-1}, d_k \right\}$, a strictly increasing sequence.

It follows that no such $\gamma$ exists and that $\tau \lessdot \sigma \in B$.
\end{proof}

%\begin{theorem} \cite{Cl}
%				%Suppose that $R=k[x_1, \ldots, x_m]$ is the multigraded 
%				%polynomial ring over the field $k$ in $m$ variables. 
%				A monomial ideal 
%				$I \subseteq R$ is lattice-linear if and only if the 
%				poset construction on $L$ recovers the minimal free resolution of the ideal $I$. 
%\end{theorem}

The main theorem of this paper provides a characterization for Betti-linear ideals that generalizes  \cite[Theorem 3.3]{Cl}:

\begin{theorem}\label{main theorem}
				%Suppose that $R=k[x_1, \ldots, x_m]$ is the multigraded polynomial ring over the 
				%field $k$ in $m$ variables. 
				A monomial ideal $I \subseteq R$ is 
				Betti-linear if and only if the poset construction on 
				$B$ recovers the minimal free resolution of the ideal $I$. 
				In particular, this provides an explicit canonical construction of the minimal free resolution 
				of monomial ideals with pure resolution.
\end{theorem}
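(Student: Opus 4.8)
The plan is to prove the biconditional by unwinding both sides into statements about the differential $\phi_\bullet$ of the minimal free resolution $\mathcal F$, using the Betti poset $B$ as the intermediary, and then invoking the functorial properties of Tchernev's poset construction developed in Sections~3 and~4. For the forward direction, assume $I$ is Betti-linear and fix the homogeneous bases $B_t$ witnessing this. The poset construction applied to $B$ produces a complex $\mathcal T=\mathcal T(B)$ of free $R$-modules together with a natural map $mdeg$-compatible comparison; the key point is that the Betti-linearity condition ``$[\tau:\gamma]\neq 0 \Rightarrow mdeg(\gamma)\lessdot_B mdeg(\tau)$'' is precisely what is needed to show that $\mathcal F$, with the chosen bases, is isomorphic as a multigraded complex to $\mathcal T(B)$: the basis elements of $F_i$ sit at the elements of dimension $i$ in $B$ (or, more precisely, at the appropriate chains/strata of $\Delta(B)$), and the cover condition guarantees that $\phi_i$ factors through the ``covering'' part of the construction, so that the matrix entries of $\phi_i$ match those prescribed by the poset construction up to the allowed invertible rescalings.

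For the converse, suppose the poset construction on $B$ recovers $\mathcal F$, i.e.\ there is a multigraded isomorphism $\mathcal T(B)\xrightarrow{\ \sim\ }\mathcal F$. Transport the canonical bases of $\mathcal T(B)$ through this isomorphism to obtain homogeneous bases $B_t$ of the $F_t$. By construction, the differential of $\mathcal T(B)$ has the property that a basis element indexed by a stratum over $x\in B$ maps only to strata over elements $y$ with $y\lessdot x$ in $B$ (this is the defining local-to-global feature of the poset construction, recalled in Section~3), so the transported differential $\phi_i$ satisfies exactly $[\tau:\gamma]\neq 0\Rightarrow mdeg(\gamma)\lessdot_B mdeg(\tau)$. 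Hence $I$ is Betti-linear with these bases.

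The last sentence of the theorem is then immediate from the Proposition proved above: a monomial ideal with pure resolution is Betti-linear, so by the equivalence just established its minimal free resolution is recovered by the poset construction on $B$; since $B$ is determined by the (pure) graded Betti numbers, this is an explicit canonical construction. I would spell this out in one short paragraph, noting that for a pure resolution $B$ coincides with the Betti poset in the strong sense that its dimension function agrees with homological degree, which makes the poset construction especially transparent.

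The main obstacle I expect is the forward direction, specifically the claim that Betti-linearity forces $\mathcal F$ to be \emph{isomorphic} to $\mathcal T(B)$ rather than merely to admit a comparison map to or from it. One must check that the poset construction, which a priori is only guaranteed to produce \emph{a} (possibly non-minimal) resolution or a complex computing the right homology, in fact produces the minimal resolution when fed the Betti poset $B$ of an ideal already known to be Betti-linear; this requires the results of Section~4 on the Betti poset (e.g.\ that $\Delta(B)$ has the homology controlled by the multigraded Betti numbers in each degree) together with a rank count showing no redundancy. Once the ranks match and the differentials agree up to the cover condition, minimality of $\mathcal F$ and acyclicity pin down the isomorphism; getting the bookkeeping of multigraded strata and the invertible rescalings exactly right is where the real work lies.
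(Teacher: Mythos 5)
Your decomposition of the statement is the right one, and your converse direction and the pure-resolution corollary are fine (transporting the canonical bases through the isomorphism works because, by definition, the homogenized differential $\partial_i|_{\mathscr F_{i,\alpha}}=\sum_{\lambda\lessdot\alpha}\partial_i^{\alpha,\lambda}$ only has components between covers in $B$). But the forward direction, which is the substance of the theorem, is asserted rather than proved, and you say so yourself ("this is where the real work lies"). The cover condition on the matrix entries of $\phi_i$ does \emph{not} by itself imply that $\mathcal F$ agrees with the poset construction "up to invertible rescalings": the poset construction's component maps $\phi_i^{\alpha,\lambda}$ are specific compositions of Mayer--Vietoris connecting homomorphisms and inclusion-induced maps on $\widetilde H_{*}(\Delta(B_{<\alpha}),k)$, and a multigraded complex whose differential is supported on covers need not have any a priori relation to those canonical maps. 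What is needed, and what you never construct, is an explicit family of isomorphisms $V_{i,\alpha}\cong\widetilde H_{i-1}(\Delta(B_{<\alpha}),k)$ (in particular the rank equality with the Betti numbers) that intertwines the two differentials. It is also not a mere rescaling: it is a genuine change of basis in each strand.

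The paper supplies exactly this mechanism, and none of it appears in your outline: one fixes a splitting $\mathcal T=\mathcal F\oplus\mathcal E$ of the Taylor resolution, identifies $V_{i,\alpha}$ with $H_i\bigl(\mathcal T_\alpha/\sum_{\beta\lessdot_B\alpha}\mathcal T_\beta\bigr)$ and the latter, via the long exact sequence of the acyclic complex $\mathcal T_\alpha$, with $\widetilde H_{i-1}(\Theta(L_{<\alpha}),k)$, where $\Theta$ is the crosscut-type complex; one then passes from $L$ to $B$ using the vanishing of the relative homology $\widetilde H_{*}(\Theta(L_{<\alpha}),\Theta(B_{<\alpha}),k)$ and from $\Theta(B_{<\alpha})$ back to the order complex $\Delta(B_{<\alpha})$ via the map $\psi$ (the Section~4 results), and finally verifies by a diagram chase (the paper's Proposition on the large commutative diagram) that under these identifications the component $v\mapsto v_\gamma$ of the minimal differential becomes precisely the connecting-homomorphism map of the poset construction — this is where Betti-linearity is actually used, to know that $\partial^{\mathcal F}(v)=\sum_{\beta\lessdot_B\alpha}v_\beta$ lies in the covering strands so that the Mayer--Vietoris boundary picks out $[d(v_\gamma)]$. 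Without this comparison (or some substitute for it), your argument establishes only that $\mathcal F$ and the poset construction have the same "shape", not that the construction recovers $\mathcal F$; so as written there is a genuine gap at the heart of the forward implication. (A minor additional caution: basis elements of $F_i$ sit at multidegrees $\alpha\in B$ with $\widetilde H_{i-1}(\Delta(B_{<\alpha}),k)\neq 0$, not at "elements of dimension $i$ in $B$"; a single $\alpha$ can carry Betti numbers in several homological degrees.)
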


We postpone the proof of Theorem~\ref{main theorem} until Section 5.

\section{The Poset Construction}

Let $P$ be a poset. For $\alpha \in P$, let $\Delta_{\alpha} = \Delta(P_{<\alpha})$ and 
$\Delta_{\leq \alpha} = \Delta(P_{\leq \alpha})$. 
Thus
$$
\Delta_{\alpha} = \bigcup_{\lambda \lessdot \alpha} \Delta_{\leq \lambda}.
$$
Fix $\lambda \lessdot \alpha$, and set
\[
\Delta_{\alpha,\lambda}= \Delta_{\leq \lambda}\cap (\bigcup_{\lambda \neq \beta \lessdot \alpha} \Delta_{\leq \beta})
\].

\begin{definition}\label{D:one} \cite{Cl}
For $i \geq 0$, 
set $\mathscr{D}_{i,\alpha}=\widetilde{H}_{i-1}(\Delta_{\alpha}, k)$ and set
\[
\mathscr{D}_{i} = \bigoplus_{\alpha \in P} \mathscr{D}_{i, \alpha}
\]
\end{definition}

\begin{remark}
If $i=0$ and $\alpha \in A$, then $\Delta_{\alpha}$ is the empty simplicial complex, 
and so we see that 
\[
\mathscr{D}_{0,\alpha} = \widetilde{H}_{-1}(\{\emptyset\}, k)
\]
a $1$-dimensional $k$-vector space. On the other hand, if $i=1$ and $\alpha \notin A$, then 
$\Delta_{\alpha} \neq \{\emptyset\}$. From this it 
follows that $\mathscr{D}_{0, \alpha} = \widetilde{H}_{-1}(\Delta_{\alpha}, k) = 0$. Thus, 
\[
\mathscr{D}_{0} = \bigoplus_{\alpha \in A} \mathscr{D}_{0, \alpha} = 
\bigoplus_{\alpha \in A}\widetilde{H}_{-1}(\{\emptyset\}, k) \cong \bigoplus_{\alpha \in A}k.
\]
\end{remark}

Given $\lambda \lessdot \alpha \in P$, consider the Mayer-Vietoris sequence in reduced homology for the triple 
\[
   ( \Delta_{\leq \lambda}, \bigcup_{\lambda \neq \beta \lessdot \alpha} \Delta_{\leq \beta}, \Delta_{\alpha} )
\]
Write $j:\widetilde{H}_{i-2}(\Delta_{\alpha,\lambda},k) \lra 
\widetilde{H}_{i-2}(\Delta_{\lambda},k)$ for the map induced in homology by the inclusion map and let
\[
\partial^{\alpha,\lambda}_{i-1}:\widetilde{H}_{i-1}(\Delta_{\alpha},k) \lra 
\widetilde{H}_{i-2}(\Delta_{\alpha,\lambda},k)
\]
be the connecting homomorphism from the Mayer-Vietoris sequence. Recall that for $[c] \in \widetilde{H}_{i-2}(\Delta_{\alpha}, k)$, this homomorphism 
is given by 
\[
\partial^{\alpha,\lambda}_{i-1}([c]) = [d_{i-1}(c')] \in \widetilde{H}_{i-2}(\Delta_{\alpha, \lambda}, k)
\]
where we have that $c' + c'' = c \in \widetilde{C}_{i-2}(\Delta_{\alpha}, k)$ and $c'$ and $c''$ are 
any components of $c$ that are supported by $\Delta_{\leq \lambda}$ and $\bigcup_{\lambda \neq \beta \lessdot \alpha} \Delta_{\leq \beta}$ 
respectively, and $d$ is the usual simplicial boundary map.

\begin{definition} \cite{Cl}
For $i \geq 1$, define $\phi_{i}: \mathscr{D}_i \lra \mathscr{D}_{i-1}$ componentwise by 
\[ 
\phi_{i}|_{\mathscr{D}_{i,\alpha}} = \sum_{\lambda \lessdot \alpha} \phi^{\alpha,\lambda}_{i}
\]
where the map 
\[
\phi^{\alpha,\lambda}_{i}: \mathscr{D}_{i,\alpha} \lra \mathscr{D}_{i-1,\lambda}
\]
is the composition $\phi^{\alpha,\lambda}_{i} = j \circ \partial^{\alpha,\lambda}_{i-1}$.
We define $\mathscr{D}(P,k)$ as the sequence of modules and maps 
\[
\mathscr{D}(P,k): \mathscr{D}_0 \os{\phi_1}{\lla} \mathscr{D}_1 \lla \ldots 
\mathscr{D}_{i-1} \os{\phi_i}{\lla} \mathscr{D}_i \ldots \lla \mathscr{D}_k \lla 0,
\]
and we refer to $\mathscr{D}(P,k)$ as the \emph{poset construction} on $P$ over $k$.
\end{definition}
Notice that the vector space maps $\phi_i$ are canonical and determined by the structure of the 
homology of the filters $P_{\leq \alpha}$ in the poset $P$.

%\begin{remark}
%The sequence $\mathscr{D}(P)$ is not necessarily a complex of vector spaces a priori. Further, even if 
%it is a complex, it need not be exact. While the necessary and sufficient conditions to produce an %exact complex are unknown, 
%it is known that if the poset $P$ is ranked, then $\mathscr{D}(P)$ is a complex of vector spaces.
%\end{remark}

Suppose that $\eta : P \lra \mathbb{Z}^{n}$ is a map of partially ordered sets and $A$ is the set of minimal elements of the poset $P$. Let $N \subseteq R$ be the ideal 
minimally generated by the set
\[
\{x^{\eta(a)} : a \in A\}.
\]
Then the sequence $\mathscr{D}(P,k)$ can be 
homogenized using the map $\eta$ to produce a sequence of multigraded $R$-modules and $R$-module 
homomorphisms which will approximate a free resolution of the multigraded module $R/N$.

We homogenize in the following way: For $i \geq 0$, set 
\[
\mathscr{F}_{i}(\eta) = \bigoplus_{\lambda \in P} \mathscr{F}_{i, \lambda}(\eta) = 
\bigoplus_{\lambda \in P} \mathscr{D}_{i, \lambda} \otimes_{k} R(-\eta(\lambda))
\]
and note the multigrading satisfies $mdeg(v \otimes x^{a}) = a + \eta(\lambda)$ for each $v \in \mathscr{D}_{i, \lambda}$. 

The differential in this sequence is defined componentwise in homological degree  $i \geq 1$ by 
the map $\partial_{i} : \mathscr{F}_{i}(\eta) \lra \mathscr{F}_{i-1}(\eta)$ given by 
\[
\partial_{i}|_{\mathscr{F}_{i, \alpha}(\eta)} = \sum_{\lambda \lessdot \alpha} \partial^{\alpha, \lambda}_{i},
\]
where $\partial^{\alpha, \lambda}_{i} : \mathscr{F}_{i, \alpha}(\eta) \lra \mathscr{F}_{i-1, \lambda}(\eta)$ has the 
form $\partial^{\alpha, \lambda}_{i} = x^{\eta(\alpha) - \eta(\lambda)} \otimes \phi^{\alpha, \lambda}_{i}$ for $\lambda \lessdot \alpha$. 
This gives a sequence of multigraded $R$-modules and maps
\[
\mathscr{F}(\eta) : \ldots \lra \mathscr{F}_{i}(\eta) \overset{\partial_i}{\lra} \mathscr{F}_{i-1}(\eta) \lra
\ldots \lra \mathscr{F}_{1}(\eta) \overset{\partial_1}{\lra} \mathscr{F}_{0}(\eta).
\]

\begin{definition} \cite{Cl}
If $\mathscr{F}(\eta)$ is an acyclic complex of multigraded modules, it is 
called a $\textit{poset resolution}$ of the ideal $N$.
\end{definition}

\section{Properties of Betti posets}

Let $I$ be a monomial ideal of $R$. 
%Recall that $B$ denotes the Betti poset of $I$, the sub-poset of the lcm-lattice of $I$ containing only those elements $\alpha$ for which one of the $F_i$ contains $R(-\alpha)$ as a summand. 
Recall that $L$ denotes the lcm-lattice of $I$ without its minimal element, and 
denote the inclusion of the Betti poset $B$ into $L$ by $\iota: B \lra L$. The formulas of Gasharov, Peeva, and Welker \cite{GPW} tell us that $\alpha \in L-B$ if and only if 
$\widetilde{H}_*(\Delta(L_{<\alpha}),k)=0$. By 
Tchernev and Varisco \cite{TV} and Clark and Mapes \cite{CM}, the induced map 
on homology $\iota_*: H_{*}(\Delta(B),k) \lra H_{*}(\Delta(L),k)$ 
is an isomorphism.

%The first thing to understand is how the inverse of this isomorphism is realized. 
%Indeed, if there is a sequence of covers $\gamma \lessdot \beta \lessdot \alpha \in L$ so that 
%$\alpha, \beta \in B$ but $\gamma \notin B$, then observe that $d_{L}(\alpha) \neq d_{B}(\alpha)$. 

To analyze the map $\iota$ better, we consider the following simplicial complex associated to any poset $P$. Recall that $A$ is the set of minimal elements of $P$.

\begin{definition}
%$F=\left\{a_{i_1}, \ldots, a_{i_k}\right\} \subseteq A$ be a maximal with respect to the property that 
%$\bigvee_{j=1}^{k} \left\{a_{i_j}\right\}$ is bounded in $P$. 

(a) We define $\Theta(P)$ to be the abstract simplicial complex on the set of vertices $A$ with faces the collection of all $F \subseteq A$ so that $F$ is bounded in $P$.

(b) When $\rho:P \lra Q$ is a morphism of posets sending minimal elements of $P$ to minimal elements of $Q$, we write $\Theta(\rho)$ for the induced map of simplicial complexes $\Theta(P) \lra \Theta(Q)$.

\end{definition}

We briefly discuss the connection between $\Theta(P)$ and crosscut complexes. Recall that a subset $C \subseteq P$ of a poset $P$ is called a $\emph{crosscut}$ if $(1)$ $C$ is an antichain, 
$(2)$ for every finite chain $\sigma$ in $P$ there exists some element in $C$ which is comparable to every element in $\sigma$, and 
$(3)$ if $E \subseteq C$ is bounded, then the join $\vee E$ or the meet $\wedge E$ exist in $P$. Here, when we say $E$ is bounded, we mean that $E$ has an upper or a lower bound in the poset $P$. 

If $C \subseteq P$ is a crosscut, then the crosscut complex $\Gamma(P,C)$ is defined to be the simplicial complex consisting of the bouned subsets of C. It is well known that $\Gamma(P,C)$ is homotopy equivalent to $\Delta(P)$. 

Now observe that if $A$ forms a crosscut of $P$, then $\Gamma(P,A)=\Theta(P)$. In particular, since $A$ is a crosscut of 
$L$, we obtain that $\Gamma(L,A) = \Theta(L)$ and hence 
is homotopy equivalent to $\Delta(L)$.

%we will make use of two other complexes. Let $\Gamma := \Gamma(L)$ be the $\emph{cross-cut}$ complex %on the lattice $L$. Similarly, let $\Gamma(L_{<\alpha})$ be the cross-cut complex on subposet of $L$ %consisting of all elements strictly less than $\alpha$. 

%Since the Betti poset $B$ is not necessarily a lattice, we can not consider it's crosscut complex. %However, for each $\alpha \in B$ we can 
%consider a simplicial complex we will denote $\Theta(B_{<\alpha})$ which is defined by facets all %possible $F$ which are maximal subsets of the atoms of 
%$B$ less than $\alpha$ so that $lcm(F) \neq \alpha$. Then define
%\[
%\Theta := \Theta(B) = \bigcup_{\alpha \in B} \Theta(B_{<\alpha})
%\]

%It is well known that $\Gamma$ is homotopy equivalent to $\Delta(L)$. We will show further that 
%$\Theta$ is homologically isomorphic to both $\Gamma$ and $\Delta(B)$.

Next, consider the map $\Theta(\iota): \Theta(B) \lra \Theta(L)$. 
We would like to show that the map $\Theta(\iota)_* : \widetilde{H}_{i}(\Theta(B_{<\alpha}),k) \lra 
\widetilde{H}_{i}(\Theta(L_{<\alpha}),k)$ 
is an isomorphism for all $i$. To do so, we will make use of the following key lemma:

\begin{lemma}
The relative homology groups $\widetilde{H}_{i}(\Theta(L_{<\alpha}),\Theta(B_{<\alpha}),k)$ are $0$ for all $i$.
\end{lemma}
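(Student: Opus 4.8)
The plan is to build $\Theta(L_{<\alpha})$ from $\Theta(B_{<\alpha})$ by attaching one simplex at a time, each attachment leaving reduced homology unchanged. Write $A_\alpha:=\{a\in A:a<\alpha\}$ and, for $\beta\in L$, $\mathrm{supp}(\beta):=\{a\in A:a\le\beta\}$. Since $A\subseteq B\subseteq L$ and every element of $L$ dominates an atom, $A_\alpha$ is the set of minimal elements of both $L_{<\alpha}$ and $B_{<\alpha}$; thus $\Theta(L_{<\alpha})$ and $\Theta(B_{<\alpha})$ are simplicial complexes on the common vertex set $A_\alpha$, the second a subcomplex of the first. Writing $2^{S}$ for the full simplex on a set $S\subseteq A_\alpha$, a subset $F\subseteq A_\alpha$ is bounded in $L_{<\alpha}$ (respectively in $B_{<\alpha}$) exactly when $F\subseteq\mathrm{supp}(\beta)$ for some upper bound $\beta$ of $F$ lying in that poset, so
\[
\Theta(L_{<\alpha})=\bigcup_{\beta\in L_{<\alpha}}2^{\mathrm{supp}(\beta)},\qquad
\Theta(B_{<\alpha})=\bigcup_{\beta\in B_{<\alpha}}2^{\mathrm{supp}(\beta)}.
\]

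Next, list $L_{<\alpha}\setminus B=\{\gamma_1,\dots,\gamma_t\}$ so that $\gamma_m<\gamma_j$ implies $m<j$, put $S_j:=\mathrm{supp}(\gamma_j)=\{a\in A:a<\gamma_j\}$ (so $\textstyle\bigvee S_j=\gamma_j$, since $\gamma_j$ is the lcm of the minimal generators dividing it), and set $K_0:=\Theta(B_{<\alpha})$ and $K_j:=K_{j-1}\cup 2^{S_j}$; by the displayed formulas $K_t=\Theta(L_{<\alpha})$. A simplex lies in $K_j$ but not $K_{j-1}$ precisely when it lies in $2^{S_j}$ but not $2^{S_j}\cap K_{j-1}$, so $C_\bullet(K_j)/C_\bullet(K_{j-1})\cong C_\bullet(2^{S_j})/C_\bullet(2^{S_j}\cap K_{j-1})$; combining this with the long exact sequence of the pair $(2^{S_j},2^{S_j}\cap K_{j-1})$ and the contractibility of the simplex $2^{S_j}$ gives $\widetilde H_i(K_j,K_{j-1},k)\cong\widetilde H_{i-1}(2^{S_j}\cap K_{j-1},k)$. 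Hence, once we know each overlap $2^{S_j}\cap K_{j-1}$ is $k$-acyclic, the long exact sequences of the triples $(K_j,K_{j-1},K_0)$ and an induction on $j$ yield $\widetilde H_i(\Theta(L_{<\alpha}),\Theta(B_{<\alpha}),k)=\widetilde H_i(K_t,K_0,k)=0$, which is the assertion.

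It remains to show each overlap is $k$-acyclic. If the top face $S_j$ already lies in $K_{j-1}$, then $2^{S_j}\subseteq K_{j-1}$ and the overlap is the contractible simplex $2^{S_j}$. If not — equivalently, no element of $B_{<\alpha}$ is $\ge\gamma_j$ — one checks that $2^{S_j}\cap K_{j-1}=\{F\subseteq S_j:\textstyle\bigvee F<\gamma_j\}=\Theta(L_{<\gamma_j})$: for the inclusion ``$\supseteq$'', an $F$ with $\textstyle\bigvee F<\gamma_j$ has $\delta:=\bigvee F\in L_{<\alpha}$, and either $\delta\in B$, so $F\subseteq\mathrm{supp}(\delta)$ gives $F\in K_0$, or $\delta=\gamma_m$ with $\gamma_m<\gamma_j$, whence $m<j$ and $F\subseteq S_m$ gives $F\in 2^{S_m}\subseteq K_{j-1}$; for ``$\subseteq$'', if $F\subseteq S_j$ lies in $K_{j-1}$ with $\textstyle\bigvee F=\gamma_j$, then either $F\subseteq\mathrm{supp}(\beta)$ for some $\beta\in B_{<\alpha}$, forcing $\gamma_j<\beta$ and hence $S_j\in K_0$ against the case hypothesis, or $F\subseteq S_l$ with $l<j$, forcing $\gamma_j\le\bigvee S_l=\gamma_l$ and hence $j<l$, a contradiction. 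Finally, since $\gamma_j\notin B$ the atoms $A\cap L_{<\gamma_j}$ form a crosscut of $L_{<\gamma_j}$, so $\Theta(L_{<\gamma_j})=\Gamma(L_{<\gamma_j},A\cap L_{<\gamma_j})$ is homotopy equivalent to $\Delta(L_{<\gamma_j})$, which is $k$-acyclic by the criterion of Gasharov, Peeva and Welker recalled above; so the overlap is $k$-acyclic.

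The main obstacle is the overlap computation in the second case: identifying $2^{S_j}\cap K_{j-1}$ with $\Theta(L_{<\gamma_j})$ on the nose requires careful use of the chosen linear extension of $L_{<\alpha}\setminus B$, the identity $\textstyle\bigvee\mathrm{supp}(\gamma)=\gamma$, and the dichotomy that an element $\delta<\gamma_j$ either lies in $B$ (so the simplex it supports was already placed in $K_0$) or equals some $\gamma_m$ with necessarily $m<j$. Once this is secured, the crosscut theorem and the Gasharov--Peeva--Welker criterion finish the proof. The degenerate case in which $\alpha$ is a minimal element of $L$ is immediate, since then $L_{<\alpha}=B_{<\alpha}=\emptyset$.
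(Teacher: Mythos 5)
Your proof is correct, and it ultimately rests on the same key input as the paper's argument --- the $k$-acyclicity of $\Theta(L_{<\gamma})$ for $\gamma\in L\smallsetminus B$, obtained from the crosscut theorem together with the Gasharov--Peeva--Welker criterion --- but the reduction to that input is packaged differently. The paper works directly at the chain level: given a relative cycle $c$, it groups the faces of $c$ by their joins $\alpha_F$ (all of which lie in $L\smallsetminus B$), observes that the part of $c$ supported over a maximal such join $\alpha_1$ is a relative cycle of the acyclic pair $\bigl(\Theta(L_{\leq\alpha_1}),\Theta(L_{<\alpha_1})\bigr)$, subtracts the corresponding relative boundary, and iterates, the termination being controlled by the poset dimension of the joins involved. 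You instead build $\Theta(L_{<\alpha})$ from $\Theta(B_{<\alpha})$ by attaching the full simplices $2^{S_j}$, $S_j=\mathrm{supp}(\gamma_j)$, one at a time along a linear extension of $L_{<\alpha}\smallsetminus B$, identify each attaching overlap $2^{S_j}\cap K_{j-1}$ with $\Theta(L_{<\gamma_j})$ (this is where the linear extension and the dichotomy ``$\bigvee F\in B$ or $\bigvee F=\gamma_m$ with $m<j$'' do the work), and finish by excision and the long exact sequences of the pairs and triples. Your filtration version is arguably cleaner: the overlap identification is established once and for all, the induction is a transparent cell-attachment argument rather than repeated modification of a chosen cycle, and it avoids the bookkeeping in the paper's iteration (tracking which faces of $\mathrm{supp}(\partial_i z_{\alpha_1})$ could appear in $\partial_i c$, and the final count of how many passes are needed). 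What the paper's version buys is a hands-on, self-contained chain computation that never invokes exact sequences beyond the definition of relative homology. Both arguments are, at bottom, filtrations of $\Theta(L_{<\alpha})$ by the non-Betti joins, so the underlying idea is shared. One cosmetic slip on your side: the atoms below $\gamma_j$ form a crosscut of $L_{<\gamma_j}$ regardless of whether $\gamma_j\in B$; the hypothesis $\gamma_j\notin B$ is needed only for the acyclicity of $\Delta(L_{<\gamma_j})$ via Gasharov--Peeva--Welker, not for the crosscut property itself.
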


\begin{proof}
Let $c \in C_{i}(\Theta(L_{<\alpha}),k)$ be an $i$-chain 
such that $\partial_{i}c \in \Theta(B_{<\alpha})$. Then it 
is enough to show that there exists $b \in \Theta(L_{<\alpha})$ so that $c - \partial_{n}b \in \Theta(B_{<\alpha})$.

We know that $c$ is an element of $C_{i}(\Theta(L_{<\alpha}),k)$, 
so we may write 
\[
c = \sum_{F \in \Theta(L_{<\alpha})}a_{F}F,
\] 
and without loss of generality, we may assume that each $F$ is not in $\Theta(B_{<\alpha})$. 
For each $F$, consider $\alpha_F = \bigvee_{L}\left\{a : a \in F\right\}$, hence 
$\left\{\alpha_1, \ldots, \alpha_k\right\}$ are joins of the faces $F$ and $\alpha_j \notin B$. 
Let $\alpha_1, \ldots, \alpha_p$ be the maximal elements among all the $\alpha_j$. It is necessarily true that there is no $\beta \in B$ so 
that $\alpha_j < \beta < \alpha$ in $L$ for any $j$. 

For each $j$, let $z_{\alpha_j} = \sum_{\bigvee(F)=\alpha_j}a_{F}F$. Then of course we have that $c = z_{\alpha_1} + \ldots + z_{\alpha_k}$, hence, 
\[
\partial_{i}z_{\alpha_1} = \partial_{i}c - \sum_{2 \leq j \leq k}\partial_{i}z_{\alpha_j}
\]

For a chain $\sigma$ with $\partial\sigma = \sum c_{m}\tau_m$, define 
$supp(\partial\sigma) = \left\{\tau_m : c_m \neq 0\right\}$.
Now suppose that $F$ is a face of $supp(\partial_{i}z_{\alpha_1})$ such that $\alpha_F = \alpha_1$. Then notice that $F$ is not in the support of 
$\sum_{2 \leq j \leq k}\partial_{i}z_{\alpha_j}$, so therefore we must have that $F \in supp(\partial_{i}c)$. This however implies that $F \in \Theta(B_{<\alpha})$, which implies 
there is $\beta \in B$ so that $\alpha_1 < \beta < \alpha$. Since this is a contradiction, we must conclude that each face $F \in supp(\partial_{i}z_{\alpha_1})$ satisfies $\alpha_F < \alpha_1$. 

From this we see that $\partial_{i}z_{\alpha_1} \in \Theta(L_{<\alpha_n})$ while $z_{\alpha_1} \in 
\Theta(L_{\leq \alpha_1})$. Therefore, $z_{\alpha_1}$ is a relative cycle of the pair $(\Theta(L_{\leq \alpha_1}), \Theta(L_{<\alpha_1}))$. However, the 
relative chain complex of the pair $(\Theta(L_{\leq \alpha_1}), \Theta(L_{<\alpha_1}))$ is acyclic over $k$ as $\Theta(L_{\leq\alpha})$ is a simplex and $\Theta(L_{<\alpha_1})$ is acyclic over $k$ as it is 
homotopy equal to $\Gamma(L_{<\alpha_1})$ and
$\alpha_n \notin B$. 
So we get that $z_{\alpha_1}$ is also a relative boundary. Due to this, we can find $b_1 \in \Theta(L_{\leq \alpha})$ so that $z_{\alpha_1} - \partial_{i}b_{1} \in \Theta(L_{<\alpha})$. 

Repeat this for $\alpha_2, \dots, \alpha_p$ to get $b_2, \ldots, b_p$. Then consider the element 
\[
c_1 := c - \sum_{1 \leq j \leq p}\partial_{i}b_{j}.
\] 
The element $c_1$ may not yet satisy the conditions of the lemma, but for certain, if $G$ is a face appearing in the expression for $c_1$, we have that 
$\alpha_G < \alpha_j$ for (at least) one of the $\alpha_j, j=1,\ldots,p$. 

If $c_1$ does not  satisfy the lemma, we may iterate the same procedure. Letting $l = max\left\{d(\alpha_1), \ldots, d(\alpha_p)\right\}$, we see that after obtaining $c_l$, 
the process has terminated with $c_l \in \Theta(B_{<\alpha})$ as desired.
\end{proof}

\begin{proposition}
The map $\Theta(j)_* : \widetilde{H}_{i}(\Theta(B_{<\alpha}),k) \lra \widetilde{H}_{i}(\Theta(L_{<\alpha}),k)$ 
is an isomorphism for all $i$.
\end{proposition}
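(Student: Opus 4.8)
The plan is to deduce the proposition from the key lemma via the long exact sequence of the pair. Here is the approach.

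\medskip

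First I would fix $\alpha$ and consider the pair $\bigl(\Theta(L_{<\alpha}), \Theta(B_{<\alpha})\bigr)$. Since $B_{<\alpha} \subseteq L_{<\alpha}$ as posets and the inclusion sends minimal elements to minimal elements (indeed $A$, the set of minimal elements, is common to both, being the set of minimal generators of $I$), the induced map $\Theta(\iota)$ is a genuine inclusion of simplicial complexes on the common vertex set $A$, so the relative chain complex makes sense and computes $\widetilde{H}_\ast\bigl(\Theta(L_{<\alpha}), \Theta(B_{<\alpha}),k\bigr)$. The lemma just proved states that these relative groups vanish in every degree. The long exact sequence in reduced homology of this pair then reads, for each $i$,
\[
\cdots \lra \widetilde{H}_i\bigl(\Theta(B_{<\alpha}),k\bigr) \os{\Theta(\iota)_\ast}{\lra} \widetilde{H}_i\bigl(\Theta(L_{<\alpha}),k\bigr) \lra \widetilde{H}_i\bigl(\Theta(L_{<\alpha}),\Theta(B_{<\alpha}),k\bigr) \lra \widetilde{H}_{i-1}\bigl(\Theta(B_{<\alpha}),k\bigr) \lra \cdots
\]
Because the relative terms are all zero, exactness forces $\Theta(\iota)_\ast$ to be both injective and surjective in every degree $i$, which is exactly the assertion of the proposition. (I note that the statement writes $\Theta(j)_\ast$; this is the same map, $j$ being the notation for the inclusion-induced map used in the previous section, or else a typo for $\iota$.)

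\medskip

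The only genuine content beyond invoking the lemma is the bookkeeping: one must confirm that $\Theta(B_{<\alpha}) \hookrightarrow \Theta(L_{<\alpha})$ really is a subcomplex inclusion, so that the long exact sequence of the pair is available in the usual form. This follows from part (b) of the definition of $\Theta$: the morphism of posets $\iota : B \lra L$ restricts to $B_{<\alpha} \lra L_{<\alpha}$, it is injective, it is the identity on the common minimal-element set $A$, and a subset $F \subseteq A$ that is bounded in $B_{<\alpha}$ is certainly bounded in $L_{<\alpha}$ — so $\Theta(\iota)$ carries faces to faces and is an inclusion of complexes on the vertex set $A$. I would also remark that the reduced long exact sequence is valid here because both complexes are nonempty (each contains every singleton $\{a\}$ with $a \in A$, $a < \alpha$, whenever such $a$ exist; the edge cases where no such $a$ exists are handled by the usual conventions for reduced homology of the void or irrelevant complex, and in those cases both sides vanish trivially).

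\medskip

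I do not anticipate a serious obstacle here: the heavy lifting is entirely in the preceding lemma, and the proposition is a formal consequence. If anything, the mild subtlety is purely expository — making sure the reduced-versus-unreduced conventions are consistent across the triple and spelling out that "$\Theta(j)$" means "$\Theta(\iota)$". I would write the proof in two sentences: one invoking the lemma and the long exact sequence of the pair, one concluding that the connecting and relative terms vanish so the map is an isomorphism.
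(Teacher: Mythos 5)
Your proposal is correct and is exactly the paper's argument: invoke the vanishing of the relative groups $\widetilde{H}_{i}(\Theta(L_{<\alpha}),\Theta(B_{<\alpha}),k)$ from the preceding lemma and conclude via the long exact sequence of the pair that the inclusion-induced map is an isomorphism in every degree. Your extra bookkeeping (that $\Theta(B_{<\alpha})$ really is a subcomplex of $\Theta(L_{<\alpha})$ on the common vertex set, and that $\Theta(j)$ means $\Theta(\iota)$) is harmless and, if anything, slightly more careful than the paper's two-line proof.
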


\begin{proof}
Consider the long exact sequence in homology
\begin{align*}
\ldots \lra \widetilde{H}_{i+1}(\Theta(L_{<\alpha}),\Theta(B_{<\alpha}),k) \lra 
\widetilde{H}_{i}(\Theta(B_{<\alpha}),k) \lra \widetilde{H}_{i}(\Theta(L_{<\alpha}),k) \\
               \widetilde{H}_{i}(\Theta(L_{<\alpha}),\Theta(B_{<\alpha}),k) \lra \ldots
\end{align*}
and apply the lemma above.
%, we see this simplifies to 
%\[
%\ldots \lra 0 \lra \widetilde{H}_{i}(\Theta(B_{<\alpha}),k) \lra %\widetilde{H}_{i}(\Theta(L_{<\alpha}),k) 
%\lra 0 \lra \ldots
%\] 
%and because the sequence is exact the result follows for all $i$.
\end{proof} 

Consider the map $\Psi: \Delta(L) \lra \Theta(L)$ given by 
\[
\sigma \mapsto \left\{x \in A : \sigma \in \Delta(L_{\geq x})\right\}
\]
where $L_{\geq x} = \left\{y \in L : y \geq x \right\}$. Note that $\Psi$ is precisely the map that realizes the homotopy equivalence between $\Delta(L)$ and $\Theta(L)$, see \cite{Bj}. Restricting $\Psi$ to $B$, we obtain the map $\psi: \Delta(B) \lra \Theta(B)$ given by 
\[
\sigma \mapsto \left\{ x \in A : \sigma \in \Delta(B_{\geq x}) \right\}.
\]

%Now we study the map $\psi: \Delta(B) \lra \Theta(B)$. To this end, we will study the corresponding map 
%$\Psi: \Delta(L) \lra \Theta(L)$. 

While $\Psi$ is always a homotopy equivalence, in general it will not be the case that $\psi$ is a homotopy equivalence. However we can show that it is an isomorphism in homology. For this we use the following lemma:

%\begin{proposition}
%Let $\Delta$ be a simplicial complex and $(\Delta_i)_{i \in I}$ a family of subcomplexes such that %$\Delta = \bigcup_{i \in I} \Delta_i$. 
%Then if every nonempty finite intersection $\Delta_{i_1} \cap \Delta_{i_2} \cap \ldots \cap %\Delta_{i_k}$ is homologically trivial over a fixed 
%field, $k$. Then $\Delta$ and the nerve $\mathscr{N}(\Delta_i)$ are homologically isomorphic.
%\end{proposition}
%
%\begin{proof}
%For each $J \subseteq I$, we have that $\widetilde{H}_{*}(\bigcap_{j \in J}\Delta_{j} ; k) = 0$. %Consider the map 
%\[
%g_j: \Delta \stackrel{sd}{\lra} sd(\Delta) \stackrel{f}{\lra} sd(\mathscr{N}(\Delta_j)) %\stackrel{unsd}{\lra} \mathscr{N}(\Delta_j)
%\]
%where the map $f: sd(\Delta) \lra sd(\mathscr{N}(\Delta_j))$ is the order reversing map so that for %$\sigma \in sd(\Delta)$ we have that 
%\[
%f(\sigma) = \left\{ j \in J : \sigma \in \Delta_j \right\}
%\]
%
%Let $P$ be the face poset of the complex $\Delta$. Let $Q$ be the face poset of the nerve considered %with reverse order. 
%Then for any $q \in Q$, we have that $\Gamma := f^{-1}(Q_{\leq q}) = \bigcap_{i=1}^{l}\Delta_{s_i}$. %However, based on the assumption that every nonempty finite intersection $\Delta_{i_1} \cap %\Delta_{i_2} \cap \ldots \cap \Delta_{i_k}$ is homologically trivial over the field $k$, we see that %$\Delta(\Gamma)$ is acyclic. By applying Cor. 4.3 in \ref{BWW}, we get that $\widetilde{H}_{r}(P) %\cong \widetilde{H}_{r}(Q)$ for all values of $r$, with the same result holding over any field. 
%\end{proof}

\begin{lemma}
The diagram
%\[
\begin{equation}
\begin{CD}
C(\Delta(B),k) @>{\Delta(j)}>> C(\Delta(L),k) \\
@V{sd}VV                   @VV{sd}V \\
C(sd(\Delta(B)),k) @>{sd(\Delta(j))}>> C(sd(\Delta(L)),k) \\
@V{sd(\psi)}VV                          @VV{sd(\Psi)}V \\
C(sd(\Theta(B)),k) @>{sd(\Theta(j))}>> C(sd(\Theta(L)),k) \\
@A{sd}AA                        @AA{sd}A \\
C(\Theta(B),k)   @>{\Theta(j)}>>  C(\Theta(L),k) \\
\end{CD}
\label{eq:A}
\end{equation}
%\]
is commutative. 
\end{lemma}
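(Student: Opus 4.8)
The plan is to verify commutativity of the diagram square by square, reducing everything to the naturality of the relevant chain-level maps. The diagram has three horizontal "rows of squares": the top square involving the barycentric subdivision map $sd$, the middle square involving the maps $sd(\psi)$ and $sd(\Psi)$, and the bottom square involving $sd$ again (drawn with upward arrows). First I would observe that the top square and the bottom square each commute purely by the naturality of the chain-level barycentric subdivision operator $sd$: for any simplicial map $f: K \to K'$ one has $sd \circ C(f) = C(sd(f)) \circ sd$ on chains, since $sd(f)$ is by definition the simplicial map $F(K) \to F(K')$ induced on face posets by $f$, and the standard subdivision chain map is natural with respect to simplicial maps. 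Applying this with $f = \Delta(j): \Delta(B) \to \Delta(L)$ gives the top square; applying it with $f = \Theta(j): \Theta(B) \to \Theta(L)$ gives the bottom square (here the vertical $sd$ arrows point upward, which is harmless — it is the same natural identity read in the other direction).

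The substance is the middle square, which asserts
\[
sd(\Theta(j)) \circ sd(\psi) \;=\; sd(\Psi) \circ sd(\Delta(j))
\]
as chain maps $C(sd(\Delta(B)),k) \to C(sd(\Theta(L)),k)$. Since each side is $sd$ applied to a simplicial map, and $sd$ on chains is functorial (i.e.\ $sd(g \circ f) = sd(g) \circ sd(f)$ whenever the composites make sense, because $F(-)$ is a functor and $\Delta(-)$ is a functor), it suffices to check the corresponding identity of simplicial maps on the first barycentric subdivisions, namely
\[
\Theta(j) \circ \psi \;=\; \Psi \circ \Delta(j) : \Delta(B) \to \Theta(L),
\]
viewed after one subdivision — equivalently, that the underlying set maps on faces agree. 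This is where I would do the one genuine computation: unwind the definitions. A face $\sigma$ of $\Delta(B)$ is a chain in $B$. On one hand, $\psi(\sigma) = \{x \in A : \sigma \in \Delta(B_{\geq x})\}$, and then $\Theta(j)$ sends this subset of $A$ to the same subset viewed as a face of $\Theta(L)$ (recall $\Theta(j)$ sends minimal elements to minimal elements and $B$, $L$ share the same minimal set $A$). On the other hand, $\Delta(j)(\sigma)$ is the same chain $\sigma$ now regarded in $L$, and $\Psi$ sends it to $\{x \in A : \sigma \in \Delta(L_{\geq x})\}$. So the claim reduces to the poset fact: for a chain $\sigma \subseteq B$ and $x \in A$, one has $\sigma \subseteq B_{\geq x}$ if and only if $\sigma \subseteq L_{\geq x}$; and since $B$ is a subposet of $L$ with the induced order and $A \subseteq B$, each element $s$ of $\sigma$ satisfies $s \geq_B x \iff s \geq_L x$, so the two conditions coincide. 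Hence the set maps agree, the simplicial maps agree, and applying the functor $C(sd(-), k)$ (or $sd$ on chains) gives the middle square.

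The main obstacle — really the only thing requiring care — is bookkeeping around the fact that $\psi$ and $\Psi$ are only defined as maps of simplicial complexes (not chain maps directly) and that we want the diagram at the level of chain complexes: I must be explicit that the horizontal arrows in the second and third rows are $C(sd(\Delta(j)))$-style maps, that $sd(\psi)$ really means $C(sd(\psi), k)$ where $sd(\psi) = \Delta(F(\psi))$, and then invoke functoriality of $C(-,k)$ together with functoriality of $\Delta \circ F = sd$. Once the simplicial identity $\Theta(j)\circ\psi = \Psi\circ\Delta(j)$ is established (the poset computation above), all three squares follow formally, and I would assemble them to conclude that the whole rectangle in \eqref{eq:A} commutes.
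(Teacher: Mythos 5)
Your proposal is correct and follows essentially the same route as the paper: the top and bottom squares commute by naturality of the subdivision chain map, and the middle square reduces to checking that $\Psi$ and $\psi$ agree on faces of $\Delta(B)$, i.e.\ that for a chain $\sigma$ in $B$ and $x\in A$ one has $\sigma\subseteq B_{\geq x}$ iff $\sigma\subseteq L_{\geq x}$, which is exactly the paper's key computation. Your treatment is in fact slightly more explicit than the paper's about the functoriality bookkeeping (that $sd(\psi)=\Delta(F(\psi))$ and that degenerate images are sent to $0$), which is a welcome clarification rather than a deviation.
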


\begin{proof}
The top and bottom square commute for trivial reasons. 
%Indeed, if 
%$\left\{a_0 < \ldots < a_k\right\} = \sigma$ is a face of $\Delta(B)$, then the barycentric subdivision
%$sd(\sigma)$ has a face for every chain $\sigma_1 < \sigma_2 < \ldots < \sigma_n =\sigma$ in $B$. %Since the horizontal maps are inclusions, we see that the top 
%square in the diagram is commutative, as $sd(\sigma)$ is the same regardless 
%whether $\sigma$ is considered as a face of $\Delta(B)$ or of $\Delta(L)$. 

To show that the middle square is commutative, let $\Psi: \Delta(L) \lra \Theta(L)$ 
%be the map defined by $h(\sigma) = \left\{a \in A : \sigma \in \Delta(L_{\geq a})\right\}$. Of course, the map $h$ gives rise to the map 
%in the diagram, 
%\[
%f: \Delta(P(\Delta(L)) = sd(\Delta(L)) \lra \Delta(\Theta(L) = sd(\Theta(L)).
%\]
and note that $sd(\Psi)([\sigma_0 \subset \ldots \subset \sigma_k]) = [\Psi(\sigma_k) \subset \ldots \subset \Psi(\sigma_0)]$, or $0$ 
if the simplex defined by this formula is degenerate. Assume that each $\sigma_i \in 
P(\Delta(B))$. Then $\Psi(\sigma_i) = \left\{a \in A : \sigma \in \Delta(L_{\geq a})\right\} = \left\{a \in A : \sigma \in \Delta(B_{\geq a})\right\}$, the last equality 
being an equality of sets, as $\sigma_i \in P(\Delta(B))$ tells us that if $\sigma \in \Delta(L_{\geq a})$, 
then we also have $\sigma \in \Delta(B_{\geq a})$. We conclude that $sd(\psi)$ has 
image contained in $sd(\Theta(B))$. %Again, since the horizontal maps are inclusions, we can conclude that the middle square also commutes. 
%Finally, we show that the top square commutes. Let $A=\left\{a_1, \ldots, a_l\right\}$ be the set of %atoms of $L$. 
%Order $A$ by $a_i < a_j$ whenever $i<j$. Then the map $unsd: sd(\Theta(L)) \lra \Theta(L)$ is defined 
%by mapping a face to the smallest atom contained in it under our fixed order. Let $\sigma_0 \subseteq %\ldots 
%\subseteq \sigma_k$ be a chain of faces such that for each $i$, $\sigma_i \in \Theta(L)$. For each %$i$, suppose that 
%$\sigma_i = \left\{a_{i,1} > \ldots > a_{i, n_i}\right\}$. Then we have that 
%\[
%unsd([\sigma_0 \subseteq \ldots \subseteq \sigma_k]) = 
%[a_{0, n_0}, \ldots, a_{k,n_k}].
%\]
%The right hand side of this equality is bounded in $\Theta(L)$, 
%this is a subset of the set of atoms comprising 
%$\sigma_k$. Likewise, if each $\sigma_i$ we taken so that they were in 
%$\Theta(B)$, then we would have the right hand side of 
%the equality to be bounded in $\Theta(B)$. Due to the fact 
%that the horizontal maps are inclusions, it follows that the top square 
%commutes as well. 
\end{proof}

\begin{proposition}
For any $\alpha \in B$, the map 
\[
\widetilde{H}(\psi): \widetilde{H}_{i}(\Delta(B_{<\alpha},k) 
\lra \widetilde{H}_{i}(\Theta(B_{<\alpha}),k)
\] is an isomorphism.
\end{proposition}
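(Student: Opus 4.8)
The plan is to deduce this Proposition by combining the two earlier results—the Proposition asserting that $\Theta(j)_* \colon \widetilde H_i(\Theta(B_{<\alpha}),k) \to \widetilde H_i(\Theta(L_{<\alpha}),k)$ is an isomorphism, and the known fact that $\iota_* \colon H_*(\Delta(B),k) \to H_*(\Delta(L),k)$ is an isomorphism—with the commutative diagram of Lemma~\ref{eq:A} restricted to the filters below $\alpha$. First I would observe that all four maps $\Psi$, $\psi$, $\Theta(\iota)$, $\Delta(\iota)$ are compatible with passing to the subposets $L_{<\alpha}$ and $B_{<\alpha}$, since each is defined ``locally'' in terms of which minimal elements lie below a given simplex; thus the diagram \eqref{eq:A} restricts to one with $B$ replaced by $B_{<\alpha}$ and $L$ replaced by $L_{<\alpha}$ throughout. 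I would also note that the two vertical barycentric subdivision maps $sd$ appearing in that diagram are chain homotopy equivalences inducing isomorphisms on homology, and likewise $sd(\Psi)$ is the chain-level realization of the homotopy equivalence $\Psi \colon \Delta(L_{<\alpha}) \to \Theta(L_{<\alpha})$, so $\widetilde H(\Psi)$ and $\widetilde H(sd(\Psi))$ are isomorphisms.

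The core of the argument is then a diagram chase on homology. Applying $\widetilde H_i(-,k)$ to the restricted version of \eqref{eq:A} yields a commuting rectangle whose left column is (up to the $sd$ isomorphisms) the map $\widetilde H_i(\psi)$ we want to control, whose right column is $\widetilde H_i(\Psi)$ (an isomorphism), whose top horizontal map is $\widetilde H_i(\Delta(\iota))$ (an isomorphism, by Tchernev--Varisco / Clark--Mapes applied to the lcm-lattice of the ideal $I_{\geq \alpha}$, or more directly because $B_{<\alpha}$ is the Betti poset sitting inside $L_{<\alpha}$), and whose bottom horizontal map is $\widetilde H_i(\Theta(\iota))$ (an isomorphism, by the preceding Proposition). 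From ``three out of four maps in a commuting square are isomorphisms'' it follows formally that the fourth, $\widetilde H_i(\psi)$, is an isomorphism as well.

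There is one point that requires care, which I expect to be the main obstacle: justifying that the horizontal map $\widetilde H_i(\Delta(\iota)) \colon \widetilde H_i(\Delta(B_{<\alpha}),k) \to \widetilde H_i(\Delta(L_{<\alpha}),k)$ is an isomorphism for \emph{every} $\alpha \in B$, not just for the ambient posets $B$ and $L$. The cited statement of Tchernev--Varisco and Clark--Mapes is phrased for the full Betti poset inside the full lcm-lattice. To apply it below $\alpha$ one argues that $L_{<\alpha}$ is (an order-shift of) the lcm-lattice of the monomial ideal generated by $\{x^{\eta(a)} : a \in A,\ \eta(a) < \alpha\}$ with its minimal element removed, and that $B_{<\alpha}$ is precisely the Betti poset of that smaller ideal—this last being exactly the content of the Gasharov--Peeva--Welker criterion, since $\beta \in L_{<\alpha}$ lies outside $B$ iff $\widetilde H_*(\Delta(L_{<\beta}),k) = 0$, a condition that only depends on $L_{<\beta}$ and hence is unchanged on passing to the truncation. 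Once that identification is in place the cited isomorphism applies verbatim. I would therefore structure the write-up as: (i) the restriction-to-filters observation; (ii) the identification of $B_{<\alpha} \hookrightarrow L_{<\alpha}$ as a Betti-poset-in-lcm-lattice inclusion, invoking \cite{GPW}; (iii) the statement that $\widetilde H(\Psi)$, the $sd$-maps, and the two horizontal maps are isomorphisms; (iv) the four-lemma-style diagram chase concluding that $\widetilde H_i(\psi)$ is an isomorphism.
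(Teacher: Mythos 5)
Your overall strategy is exactly the one the paper uses: restrict the commutative diagram \eqref{eq:A} to the filters below $\alpha$, observe that the two subdivision maps, $sd(\Psi)$ (since $\Psi$ is a homotopy equivalence), the bottom map $\Theta(\iota)$ (by the preceding proposition), and the top map $\Delta(\iota)$ are homology isomorphisms, and conclude by a three-out-of-four diagram chase that $\psi$ induces an isomorphism. Items (i), (iii) and (iv) of your plan are fine and match the paper's (very terse) argument.

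The problem is precisely the point you flagged as the main obstacle, step (ii): your reduction to the truncated ideal does not establish that $\widetilde H_i(\Delta(B_{<\alpha}),k)\to\widetilde H_i(\Delta(L_{<\alpha}),k)$ is an isomorphism. Let $J=(x^{\eta(a)} : a\in A,\ \eta(a)<\alpha)$. Whenever $\alpha$ is the join of the generator degrees strictly below it --- which is the typical situation for $\alpha\in B$ of homological degree at least $1$ --- the degree $\alpha$ itself lies in the lcm-lattice $L_J$ of $J$, so $L_J$ with its bottom removed is not $L_{<\alpha}$ but a poset with maximum element $\alpha$; and since $\alpha\in B$ means $\widetilde H_*(\Delta(L_{<\alpha}),k)\neq 0$, the element $\alpha$ also lies in the Betti poset of $J$ by the Gasharov--Peeva--Welker criterion. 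Consequently both order complexes occurring in the global Tchernev--Varisco/Clark--Mapes statement for $J$ are cones, that statement is the vacuous isomorphism $0\to 0$, and it gives no information about the open intervals $B_{<\alpha}\subseteq L_{<\alpha}$. (GPW does correctly identify which elements below $\alpha$ are Betti degrees, but that is not the issue; the issue is comparing the homology of the two open lower intervals.) What is actually needed is the interval-level form of the Tchernev--Varisco and Clark--Mapes results, namely that the inclusion $B\hookrightarrow L$ induces isomorphisms $\widetilde H_*(\Delta(B_{<\alpha}),k)\to\widetilde H_*(\Delta(L_{<\alpha}),k)$ for every $\alpha$; this is what those papers prove and what the present paper invokes (implicitly, having quoted only the global form) when it restricts diagram \eqref{eq:A} below $\alpha$. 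Replace your step (ii) by a citation of that interval-level statement and the rest of your argument goes through unchanged.
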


\begin{proof}
All maps involved in \ref{eq:A} are homology isomorphims 
with the exception of the map 
$sd(\psi): sd(\Delta(B)) \lra sd(\Theta(B))$. 
By the commutativity of the diagram,  
$sd(\psi)$ is also a homology isomorphism, hence so is $\psi$. %Hence 
%$\Delta(B)$ is homologically isomorphic to $\Theta(B)$. 
\end{proof}

\section{The proof of the main theorem}
\begin{definition}
For $\lambda \in B$, let $G_\lambda$ be the full simplex on the 
$A_\lambda = \left\{a \in A : a \leq \lambda\right\}$. 
Fix $\alpha \in B$. For each $\lambda < \alpha$, define $\Theta_{\alpha, \lambda}$ to be the complex
\[
\Theta_{\alpha, \lambda} = G_{\lambda} \bigcap 
(\bigcup_{\lambda \neq \beta \lessdot_{B} \alpha}G_{\beta})
\]
\end{definition}

While it is not necessarily true that 
$\Theta_{\alpha, \lambda} \subset \Theta(B_{<\lambda})$, 
it is true that 
$\Theta_{\alpha, \lambda} \subset \Theta(L_{<\lambda})$. 
Denote this inclusion map by $j$. Further, we know that the inclusion 
$
\Theta(B_{<\lambda}) \lra 
   \Theta(L_{<\lambda})
$ 
is a homology isomorphism. Let 
$g_*$ be the homology inverse of this map. We set %can define 
\[
\mu_* = g_* \circ j_*: \widetilde{H}_{*}(\Theta_{\alpha, \lambda}, k) \lra 
                      \widetilde{H}_{*-1}(\Theta(B_{<\lambda}), k).
\]
Let 
$
\partial_{i}^{\alpha, \lambda}: 
\widetilde{H}_{i}(\Theta(B_{<\alpha}), k) \lra 
\widetilde{H}_{i-1}(\Theta_{\alpha, \lambda}, k)
$ 
be the connecting homomorphism in the Mayer-Vietoris sequence for the triple 
$
\bigl(G_{\lambda}, \bigcup_{\lambda \neq \beta \lessdot_{\mathscr{B}} \alpha}G_{\beta}, 
            \Theta(B_{<\alpha}) \bigr).
$

%\begin{proposition}
%The following diagram is commutative
%\[
%\begin{CD}
%\widetilde{H}_{i}(\Delta_\alpha, k) @>{\psi_i}>> 
%\widetilde{H}_{i}(\Theta(B_{<\alpha}, k)) \\
%@V{\phi_i}VV                                     
%@V{\psi \circ \partial_{i-2}^{\alpha, \beta}}VV \\
%\widetilde{H}_{i-1}(\Delta_{\beta}, k) @>{\psi_{i-1}}>>  
%\widetilde{H}_{i-1}(\Theta(B_{<\beta}, k)). 
%\end{CD}
%\]
%\end{proposition}

Consider any sequence $\mathcal{S}$ of morphisms of free multigraded modules. 
This can be decomposed as 
\[
\mathcal{S}=\bigoplus_{\alpha \in \mathbb{Z}^n}S_\alpha
\]
where each $S_\alpha$ is a sequence of maps of vector spaces called the multigraded strand of $\mathcal{S}$ in degree 
$\alpha$. Let $(S_\alpha)_i$ denote the $i$th vector space. Notice also that
\[
(\m\mathcal{S})_\alpha = \sum_{\beta \lessdot \alpha}x^{\alpha - \beta}S_\beta \subset S_\alpha.
\]
We will identify $x^{\alpha - \beta}S_\beta$ with $S_\beta$ so one may write $S_\beta \subset S_\alpha$ for $\beta \lessdot \alpha$.

Now we suppose that $I$ is a monomial ideal over the polynomial ring $R=k[x_1, \ldots, x_m]$. Suppose that $\mathcal{F}$ is its minimal free resolution and for each $i$, let $B_i$ be the chosen basis of $F_i$ so that for each $v \in B_i$, we have that 
\[
\partial_i(v)=\sum_{t \in B_{i-1}}[v:t]\cdot t
\]
is such that if $[v:t] \neq 0$ then $mdeg(t) \lessdot_{B} mdeg(v)$.
Let $F_{i, \alpha}$ be the free submodule of $F_i$ spanned by the set 
\[
B_{i, \alpha} = \left\{v \in B_i : mdeg(v)=\alpha\right\}
\] 
Then we have that 
\[
F_i = \bigoplus_{\alpha \in B} F_{i, \alpha}.
\]
Using the notation  
\[
V_{i, \beta} = k\left\langle v: v \in B_{i, \beta} \right\rangle
\]
we write
\[
(F_{\alpha})_i = \bigoplus_{\beta \leq \alpha}V_{i, \beta}
\]
where $x^{\alpha - \beta}V_{i, \beta}$ is identified with $V_{i, \beta}$. In 
particular, $F_i = \bigoplus V_{i, \beta} \otimes R(-\beta)$.

Let $\mathcal{T}$ denote the Taylor resolution \cite{Ta} of $I$. Then there is an exact sequence of the form
\[
0 \lra \sum_{\beta \lessdot \alpha} \mathcal{T}_\beta \lra \mathcal{T}_\alpha 
\lra \mathcal{T}_{\alpha}/\sum_{\beta \lessdot \alpha} \mathcal{T}_\beta \lra 0,
\]
and since the complex $\mathcal{T}_\alpha$ is acylcic, this yields an isomorphism upon passing to homology
\[
H_{i}(\mathcal{T}_{\alpha}/\sum_{\beta \lessdot \alpha} \mathcal{T}_\beta) \overset{\mu_i}\lra
H_{i-1}(\sum_{\beta \lessdot \alpha} \mathcal{T}_\beta) \cong \widetilde{H}_{i-1}(\Theta(L_{<\alpha}),k)
\]
for $i \geq 2$. This isomorphism is defined by 
\[
\mu_i([\overline{v}]) = [\partial^{\mathcal{T}}(v)]
\]
whenever $\overline{v}$ is a cycle in 
$\mathcal{T}_{\alpha}/\sum_{\beta \lessdot \alpha} \mathcal{T}_\beta$ represented by 
the element $v \in \mathcal{T}_\alpha$. Observe that we have 
$H_{0}(\sum_{\beta \lessdot \alpha} \mathcal{T}_{\beta}) = H_{0}(\Theta(L_{\alpha}),k)$ and hence also an isomorphism
\[
H_{1}(\mathcal{T}_{\alpha}/\sum_{\beta \lessdot \alpha}\mathcal{T}_{\beta})   \os{\mu_0}\lra
\widetilde{H}_{0}(\Theta(L_{<\alpha}),k).
\]
Let us make the following identifications in the 
minimal free resolution $\mathcal{F}$ of $I$:
{\small
\[
(F_{\alpha}/(\m F)_{\alpha})_{i} = 
\big(
F_{\alpha}/\sum_{\beta \lessdot_{B} \alpha} F_\beta 
\big)_{i} 
\\
= 
(F_\alpha)_{i}/
\sum_{\beta \lessdot_{B} \alpha}(F_{\beta})_{i} 
\\
= 
\bigoplus_{\beta \leq_{B} \alpha} V_{i, \beta} / 
\bigoplus_{\beta <_{B} \alpha}V_{i, \beta} 
\\
= V_{i , \alpha}.
\]}

Let us fix an embedding of $\mathcal{F}$ into $\mathcal{T}$. 
Then $\mathcal{T} = \mathcal{F} \bigoplus \mathcal{E}$ for some split 
exact complex of multigraded free modules $\mathcal{E}$. 

\begin{proposition}
Suppose $I$ is Betti-linear. Fix $\gamma \lessdot_{B} \alpha$. Then for each $i \geq 1$, 
the diagram:
%\begin{equation}
\[
\begin{CD}
V_{i,\alpha} @>{\partial}>> 
(\sum_{\beta\lessdot_{B}\alpha}\mathcal{F}_{\beta})_{i-1} @>{proj_\gamma}>> 
V_{i-1,\gamma} 
\\
@V{incl}VV     @.      @VV{incl}V  
\\
Z_{i}(\mathcal{T}_{\alpha}/\sum_{\beta \lessdot_{B} \alpha}\mathcal{T}_{\beta})  
@.             @. 
Z_{i-1}(\mathcal{T}_\gamma / \sum_{\nu \lessdot_{B} \gamma}\mathcal{T}_{\nu}) 
\\
@V{proj}VV     @.      @VV{proj}V 
\\
H_{i}(\mathcal{T}_{\alpha}/\sum_{\beta \lessdot_{B} \alpha}\mathcal{T}_{\beta})   
@.             @. 
H_{i-1}(\mathcal{T}_\gamma / \sum_{\nu \lessdot_{B} \gamma}\mathcal{T}_{\nu}) 
\\
@V{\mu_i}VV    @.      @VV{\mu_{i-1}}V 
\\
H_{i-1}(\sum_{\beta \lessdot_{B} \alpha}\mathcal{T}_{\beta})
@.             @. 
H_{i-2}(\sum_{\nu \lessdot_{B} \gamma}\mathcal{T}_{\nu}) 
\\
@|             @.      @|              
\\
\widetilde{H}_{i-1}(\Theta(L_{<\alpha}), k)        
@.             @.      
\widetilde{H}_{i-2}(\Theta(L_{<\gamma}), k)  
\\
@V{\iota^{-1}_{*}}VV           @.      @|     
\\
\widetilde{H}_{i-1}(\Theta(B_{<\alpha}), k) @>>>
\widetilde{H}_{i-2}(\Theta_{\alpha, \gamma}, k) @>>>
\widetilde{H}_{i-2}(\Theta(L_{<\gamma}), k)  
\end{CD}
\]
%\end{equation} 
is commutative.
\end{proposition}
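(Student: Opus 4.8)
Since the diagram contains no interior horizontal maps, proving it commutative reduces to showing that the composite ``down the left column, then along the bottom row'' and the composite ``along the top row, then down the right column'' agree as maps $V_{i,\alpha}\to\widetilde{H}_{i-2}(\Theta(L_{<\gamma}),k)$. The plan is to compute each of these four one-dimensional pieces explicitly. Fix the chosen embedding $\mathcal{F}\hookrightarrow\mathcal{T}$, so that $\partial^{\mathcal{T}}$ restricts on $\mathcal{F}$ to $\partial^{\mathcal{F}}$. For the left column: under the identification $V_{i,\alpha}=\bigl(F_{\alpha}/\sum_{\beta\lessdot_{B}\alpha}F_{\beta}\bigr)_{i}$ displayed above, a basis element $v\in B_{i,\alpha}$ maps to an element $\tilde v$ of the strand $\mathcal{T}_{\alpha}$; minimality of $\mathcal{F}$ together with $(\m F)_{\alpha}=\sum_{\beta\lessdot_{B}\alpha}F_{\beta}$ gives $\partial^{\mathcal{T}}(\tilde v)=\partial^{\mathcal{F}}(v)\in\sum_{\beta\lessdot_{B}\alpha}F_{\beta}\subseteq\sum_{\beta\lessdot_{B}\alpha}\mathcal{T}_{\beta}$, so $\overline{\tilde v}$ is a cycle, which is exactly why $incl$ lands in $Z_{i}$. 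Applying $proj$ and the formula $\mu_{i}([\overline{v}])=[\partial^{\mathcal{T}}(v)]=[\partial^{\mathcal{F}}(v)]$ shows the left column sends $v$ to the class of the cycle $\partial^{\mathcal{F}}(v)$; since $\partial^{\mathcal{F}}(v)$ is already a chain in $\bigcup_{\beta\lessdot_{B}\alpha}G_{\beta}=\Theta(B_{<\alpha})$, after the identification $H_{i-1}(\sum_{\beta\lessdot_{B}\alpha}\mathcal{T}_{\beta})=\widetilde{H}_{i-1}(\Theta(L_{<\alpha}),k)$ the map $\iota^{-1}_{*}$ simply returns $[\partial^{\mathcal{F}}(v)]\in\widetilde{H}_{i-1}(\Theta(B_{<\alpha}),k)$. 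The same argument with $(\gamma,i-1)$ in place of $(\alpha,i)$ shows the right column sends $w\in V_{i-1,\gamma}$ to $[\partial^{\mathcal{F}}(w)]\in\widetilde{H}_{i-2}(\Theta(L_{<\gamma}),k)$.

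Next I would use Betti-linearity for the top row. Write $\partial^{\mathcal{F}}(v)=\sum_{\beta}(\partial^{\mathcal{F}}v)_{\beta}$ with $(\partial^{\mathcal{F}}v)_{\beta}$ the component in multidegree $\beta$; Betti-linearity is precisely the statement that only covers $\beta\lessdot_{B}\alpha$ occur and that, under the identification $x^{\alpha-\gamma}V_{i-1,\gamma}=V_{i-1,\gamma}$, one has $proj_{\gamma}\partial^{\mathcal{F}}(v)=(\partial^{\mathcal{F}}v)_{\gamma}$; call this element $w$. Since $x^{\alpha-\gamma}\partial^{\mathcal{F}}(w)=\partial^{\mathcal{F}}\bigl((\partial^{\mathcal{F}}v)_{\gamma}\bigr)=\partial^{\mathcal{T}}\bigl((\partial^{\mathcal{F}}v)_{\gamma}\bigr)$, running $w$ down the right column shows that ``top row then right column'' carries $v$ to $\bigl[\partial^{\mathcal{T}}\bigl((\partial^{\mathcal{F}}v)_{\gamma}\bigr)\bigr]\in\widetilde{H}_{i-2}(\Theta(L_{<\gamma}),k)$.

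Now I would compute ``left column then bottom row''. Starting from $[\partial^{\mathcal{F}}(v)]\in\widetilde{H}_{i-1}(\Theta(B_{<\alpha}),k)$, note that each $(\partial^{\mathcal{F}}v)_{\beta}$ lies in $\mathcal{F}_{\beta}\subseteq\mathcal{T}_{\beta}$ and hence is supported on $G_{\beta}$, so that $\partial^{\mathcal{F}}(v)=(\partial^{\mathcal{F}}v)_{\gamma}+\sum_{\gamma\neq\beta\lessdot_{B}\alpha}(\partial^{\mathcal{F}}v)_{\beta}$ is a decomposition $c=c'+c''$ of exactly the type appearing in the Mayer--Vietoris formula for the triple $\bigl(G_{\gamma},\bigcup_{\gamma\neq\beta\lessdot_{B}\alpha}G_{\beta},\Theta(B_{<\alpha})\bigr)$. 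By that formula (as recalled in Section~3), $\partial^{\alpha,\gamma}_{i-1}[\partial^{\mathcal{F}}(v)]=\bigl[d\bigl((\partial^{\mathcal{F}}v)_{\gamma}\bigr)\bigr]=\bigl[\partial^{\mathcal{T}}\bigl((\partial^{\mathcal{F}}v)_{\gamma}\bigr)\bigr]\in\widetilde{H}_{i-2}(\Theta_{\alpha,\gamma},k)$, and $j_{*}$ carries this to the class of the same chain in $\widetilde{H}_{i-2}(\Theta(L_{<\gamma}),k)$. Since that chain is $x^{\alpha-\gamma}\partial^{\mathcal{F}}(w)$, which is supported on $\Theta(B_{<\gamma})$ by Betti-linearity applied to $w$, its $\Theta(L_{<\gamma})$-class coincides with the one produced in the previous paragraph, and the diagram commutes.

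I expect the genuine work to be bookkeeping rather than anything deep. One must verify that the multidegree-$\gamma$ component of $\partial^{\mathcal{F}}(v)$ is literally the chain $c'$ in the Mayer--Vietoris formula, not merely a homologous one; carry the degree-shift identification $x^{\alpha-\gamma}V_{i-1,\gamma}=V_{i-1,\gamma}$ and the identification $H_{*}(\sum_{\beta\lessdot_{B}\alpha}\mathcal{T}_{\beta})\cong\widetilde{H}_{*}(\Theta(L_{<\alpha}),k)$ coherently through all four pieces; and reconcile the signs of the simplicial boundary $d$ with those of the Taylor differential $\partial^{\mathcal{T}}$. The two places where the hypotheses do essential work are the well-definedness of $incl$, which uses minimality of $\mathcal{F}$ in the guise $(\m F)_{\alpha}=\sum_{\beta\lessdot_{B}\alpha}F_{\beta}$, and the fact that $\partial^{\mathcal{F}}(v)$ and $\partial^{\mathcal{F}}(w)$ are sums over $B$-covers of $\alpha$ and $\gamma$ respectively --- which is exactly Betti-linearity, and is precisely what matches the multigraded decomposition of $\partial^{\mathcal{F}}(v)$ with the Mayer--Vietoris decomposition $\Theta(B_{<\alpha})=\bigcup_{\beta\lessdot_{B}\alpha}G_{\beta}$.
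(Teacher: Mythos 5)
Your proposal is correct and follows essentially the same element chase as the paper's proof: evaluate both composites on a basis element $v\in V_{i,\alpha}$, use $\mu_i([\overline{v}])=[\partial^{\mathcal{T}}(v)]=[\partial^{\mathcal{F}}(v)]$ together with Betti-linearity to write $\partial^{\mathcal{F}}(v)=\sum_{\beta\lessdot_B\alpha}v_\beta$, and compute the Mayer--Vietoris connecting map with $c'=v_\gamma$. The only (harmless) difference is at the $\iota^{-1}_*$ step: the paper allows a correcting boundary $\partial\tau$ and checks it contributes nothing, while you note the representative $\sum_{\beta\lessdot_B\alpha}v_\beta$ is already a cycle supported on $\Theta(B_{<\alpha})$, so no correction is needed.
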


\begin{proof}
Suppose that $v \in V_{i, \alpha}$. 
Note that the assumption of Betti-linearity yields 
\[
\partial^{\mathcal{F}}(v) = \sum_{\beta \lessdot_{B} \alpha}v_{\beta}
\]
with each $v_{\beta} \in V_{i-1, \beta}$. Going down the left hand vertical side of the 
diagram we have $incl(v)=v$, 
and $proj(v)=[v]$. Following this, 
\[
\mu_{i}([v])=[\partial^{\mathcal{T}}(v)]=
[\partial^{\mathcal{F}}(v)]=[\sum_{\beta \lessdot_{B} \alpha}v_{\beta}].
\]
Recall that $\mu_{i}$ is an 
isomorphism for each $i$. The map 
$
\widetilde{H}_{i-2}(\Theta(L_{<\alpha}), k) \lra 
\widetilde{H}_{i-2}(\Theta(B_{<\alpha}), k)
$ 
sends 
$[\sum_{\beta \lessdot_{B} \alpha}v_{\beta}]$ to the class 
$[\sum_{\beta \lessdot_{B} \alpha}v_{\beta} + \partial \tau]$ 
for some boundary $\partial \tau \in \Theta(L_{<\alpha})$. 
Continuing along the bottom row, we write
$\tau = \tau' + \tau''$ where $\tau'$ is a chain in $\Theta(L_{\leq \alpha})$ and $\tau''$ has no faces in $\Theta(L_{\leq \alpha})$.  Then we have that 
$\partial \tau = \partial \tau' + \partial \tau''$, and
\[
\partial_{i-1}^{\alpha, \gamma}
\bigl([\sum_{\beta \lessdot_{B} \alpha}v_{\beta} + \partial \tau]\bigr)= 
\bigl[d_{i-2}(v_{\gamma} + d_{i-1}\tau'')\bigr]
\]
where $d_{i-2}$ is the usual simplicial boundary map.
We see that 
\[
\bigl[d_{i-2}(v_{\gamma} + d_{i-1}(\tau'')\bigr] = 
\bigl[d_{i-2}(v_\gamma)\bigr] \in \widetilde{H}_{i-3}(\Theta_{\alpha, \gamma}, k),
\]
which is mapped to the same class considered in 
$\widetilde{H}_{i-3}(\Theta(L_{<\gamma}), k)$.

Chasing the diagram in the other direction, we have that 
\[
\partial^{\mathcal{F}}(v) = \sum_{\beta \lessdot_{B} \alpha}v_{\beta}
\]
hence
\[
proj_{\gamma}(\sum_{\beta \lessdot_{B} \alpha}v_\beta )= v_{\gamma}
\]
Of course, we know that $incl(v_{\gamma})=v_{\gamma}$, and hence upon 
passing to homology we have $[v_{\gamma}]$. 
We then have that $\mu_{i-1}([v_{\gamma}])=[\partial^{\mathcal{T}}(v_\gamma)]$.

Finally, we must observe that 
$[\partial^{\mathcal{T}}(v_\gamma)] = [d_{i-2}(v_\gamma)]$ in 
$\widetilde{H}_{i-3}(\Theta(L_{< \gamma}), k)$.
\end{proof}

%\begin{Main Theorem}
%Let $I$ be an ideal of $R$ generated by monomials.
%$I$ is Betti-linear if and only if $\mathscr{F}$ can be recovered from the poset 
%construction applied to the Betti poset of the ideal $I$.
%\end{Main Theorem}
\begin{proof}[Proof of Theorem~\ref{main theorem}:]
We will show that the following diagram is commutative for each $i \geq 1$:
\small{
\[
\begin{CD}
\cdots \rightarrow \bigoplus \widetilde{H}_{i-2}(\Delta(B_{< \lambda}),k) \otimes R(-\lambda)
       @>>> \bigoplus \widetilde{H}_{i-3}(\Delta(B_{< \lambda}),k) \otimes R(-\lambda)
       \rightarrow \cdots 
\\
    @V{\sum{\psi_i} \otimes 1}VV      @VV{\sum{\psi_{i-1}} \otimes 1}V        
\\
\cdots \rightarrow \bigoplus \widetilde{H}_{i-2}(\Theta(B_{< \lambda}),k) \otimes R(-\lambda)
       @>>> \bigoplus \widetilde{H}_{i-3}(\Theta(B_{< \lambda}),k) \otimes R(-\lambda)
       \rightarrow \cdots 
\\ 
    @V{\sum{f_{i,\lambda} \otimes 1}}VV      @VV{\sum{f_{i-1,\lambda}} \otimes 1}V        
\\
\cdots \rightarrow \bigoplus V_{i, \lambda} \otimes R(-\lambda)                                         
       @>>> \bigoplus V_{i-1, \lambda} \otimes R(-\lambda)                         
       \rightarrow \cdots 
\end{CD}
\]
}
However, the top square of this diagram is commutative due to Proposition 4.6.
The map $f_{i, \lambda}$ is the following composition:
\[
f_{i, \lambda}: \widetilde{H}_{i-2}(\Theta(B_{<\lambda}), k) \lra \widetilde{H}_{i-2}(\Theta(L_{<\lambda}), k) \lra H_{i}(\mathcal{T}_{\lambda}/\sum_{\beta \lessdot_{B} \lambda}\mathcal{T}_{\beta}) \lra V_{i, \lambda}
\]
where all arrows involved are the inverses of the vertical maps from Proposition 5.2. Now, 
Proposition 5.2 immediately implies the commutativity of the bottom square. Since each $f_{i,\lambda}$ and each $\psi_i$ are isomorphisms, the result follows.
\end{proof}

\end{document}